\journal{Journal of \LaTeX\ Templates}
\newcommand{\R}{\mathcal{R}}
\newcommand{\N}{\mathcal{N}}
\newcommand{\C}{\mathbb{C}}
\newcommand{\F}{{\mathbf{F}}}
\newcommand{\G}{{\mathbf{G}}}
\begin{document}

\begin{frontmatter}

\title{
 The core inverse and constrained matrix approximation problem }

\author[]
{Hongxing Wang\corref{mycorrespondingauthor}}\cortext[mycorrespondingauthor]{Corresponding author}
\ead{winghongxing0902@163.com}

 \author[]
 {Xiaoyan Zhang}
\ead{1310362243@qq.com}
\address
{School of Science,
Guangxi Key Laboratory of Hybrid Computation and IC Design Analysis,
Guangxi University for Nationalities,
Nanning 530006, China }

\begin{abstract}
In this paper,we study the constrained matrix approximation problem in the Frobenius norm by using the core inverse:\begin{align}\nonumber \left\|{Mx - b} \right\|_F=\min \mbox{\rm \ \ subject to \ \ }{x\in\R(M)} ,\end{align}where $M\in\mathbb{C}^{\mbox{\rm\footnotesize\texttt{CM}}}_{n }$. We get the unique solution to the  problem, provide two Cramer's rules for the unique solution, and establish two new expressions  for the core inverse.
\end{abstract}

\begin{keyword}
core inverse;
Cramer's rule;
constrained matrix approximation problem.

\MSC[2010]  15A24\sep 15A29\sep 15A57
\end{keyword}

\end{frontmatter}

\linenumbers

\section{Introduction}
\numberwithin{equation}{section}
\newtheorem{theorem}{T{\scriptsize HEOREM}}[section]
\newtheorem{lemma}[theorem]{L{\scriptsize  EMMA}}
\newtheorem{corollary}[theorem]{C{\scriptsize OROLLARY}}
\newtheorem{proposition}[theorem]{P{\scriptsize ROPOSITION}}
\newtheorem{remark}{R{\scriptsize  EMARK}}[section]
\newtheorem{definition}{D{\scriptsize  EFINITION}}[section]
\newtheorem{algorithm}{A{\scriptsize  LGORITHM}}[section]
\newtheorem{example}{E{\scriptsize  XAMPLE}}[section]
\newtheorem{problem}{P{\scriptsize  ROBLEM}}[section]
\newtheorem{assumption}{A{\scriptsize  SSUMPTION}}[section]

\label{Introduction}
 Let $M^\ast  $, ${\mathcal{R}}(M)$ and
 $\mathcal{N}(M)$
 stand for
 the
 \emph{conjugate transpose},
\emph{range space} 
\emph{null space} 
 of $M\in {\C^{m \times n}}$,
   respectively.
 The symbol
$M(i \rightarrow b)$ denotes a matrix   from $M$
by replacing the $i$-th column of $M$ by 
$b\in {\C^{ n}}$.
 The symbol $e_i$ denotes the $i$-th column of $I_n$ in which
$1 \leq i \leq n$.
%
%
%
%
%
%
%
The \emph{Moore-Penrose inverse} of $M 
$ is
    the unique matrix $X\in\mathbb{C}^{n\times m}$ satisfying  the relations:
 $MXM = M$,
 $XMX = X$,
 $\left( {MX} \right)^\ast = MX$
 and
 $\left( {XM} \right)^\ast = XM$,
 and is  denoted by $X = M^{\dag}$,
 ({see \rm\cite{Ben2003book,Dragana2017book,Wang2018book}}).

Let $M\in\mathbb{C}^{n\times n}$ be singular.
The smallest positive integer $k$ for which
${\rm rk}\left( M^{k+1} \right)
=
{\rm rk}\left( M^k \right)$
is called the \emph{index} of $M$ and is  denoted by  ${\rm Ind}(M)$.
 The index of a non-singular matrix
is $0$ and the index of a null matrix is $1$.
Furthermore,
\begin{align}
\label{EP-1}
\mathbb{ C}^{\mbox{\rm\footnotesize\texttt{CM}}}_n &
=\left\{M\left|\
{\rm Ind}(M) \leq  1,\
   M\in \mathbb{C}^{n\times n} \right.\right\}.
\end{align}

Let
$M\in\mathbb{C}^{n\times n}$
with
 ${\rm Ind}(M)=k$.
A matrix
$X$ is
the \emph{Drazin inverse} of $M$ if 
$MXM^k = M^k$,
$XMX = X$
and
$MX = XM$.
We write
$X = M^D$ for the  Drazin inverse of $M$.
In  particular,
 when  $M\in \mathbb{C}^{\mbox{\rm\footnotesize\texttt{CM}}}_{n}$,
 the matrix  $X$   is  the \emph{group inverse} of $M$,
 and is denoted by $X = M^\#$,
 ({see \rm\cite{Ben2003book,Dragana2017book,Wang2018book}}).

The \emph{core inverse}
 of
$M \in {\mathbb{C} }_{n}^{\mbox{\rm\footnotesize\texttt{CM}}}$
is defined as the unique matrix $X\in\mathbb{C}^{n\times n}$
satisfying   the equations:
$MXM = M$, $MX^2 = X$ and $\left( {MX} \right)^\ast = MX$,
and is denoted by $X = M^{\tiny\textcircled{\#}}$,
({see \rm\cite{Baksalary2010lma681,Wang2015lma1829}}).
It is noteworthy that
the core inverse is a ``least squares''  inverse,
(see \cite{Cline1968siamjna182,Malik2014amc575}).
Moreover,
it is proved that
 $ M^{\tiny\textcircled{\#}}
 =M^\#MM^\dag$(see \cite{Baksalary2010lma681}).

Recently,
the relevant conclusions of the core inverse are very rich.
In  \cite{Baksalary2014amc450,Malik2014amc575,
Manjunatha2014lma792,Wang2018OP1218
},
generalizations of core inverse are introduced,
for example,
the core-EP inverse and the weak group inverse, etc.
In \cite{Ma2018amc176,Prasad2018sm193,
Wang2016laa289,Wang2018jsu135}, 
their algebraic properties
and
calculating methods
are studied.
In  \cite{Gao2018ca38,Dragan2014amc283
},
the studying of them
is  extended to some new fields, for example, ring and operator, etc.
Moreover,
 the inverses are used to study partial orders
in \cite{Baksalary2010lma681,
Wang2018OP1218,
Wang2015lma1829,
Wang2018lma206}.

\bigskip

  Consider the following equation:
\begin{align}
\label{CM-0}
&Mx = b.%
\end{align}
Let $M\in\C^{n\times n}$
with   ${\rm{Ind}}(M) = k$,
and
$b\in\R\left(M^k\right)$. 
Campbell and Meyer\cite{Campbell2009book}
 show that
$x=M^Db$ is the unique solution of (\ref{CM-0})
with respect to $x\in\R\left(M^k\right)$.
  Wei\cite{Wei1998amc115} gets the minimal $P$-norm solution
  of (\ref{CM-0}),
  where
  $P$ is nonsingular,
  ${P^{ - 1}}MP$   is the Jordan
canonical form of $M$
and
${\left\| x \right\|_p}={\left\| {{P^{ - 1}}x}\right\|_2}$.
Furthermore,
let $M \in {\C^{m \times n}}$.
   Wei\cite{Wei2002amc303} considers
the unique solution of
\begin{align*}
WMWx = b
\quad \mbox{\rm subject to} \quad
x \in \mathcal{R}\left( {{{\left( {MW} \right)}^{{k_1}}}} \right),
\end{align*}
  where $W \in {\mathbb{C}^{n \times m}}$,
  $k_1={\rm{Ind}}\left( {MW} \right)$,
 $k_2={\rm{Ind}}\left( {WM} \right)$
and
$b \in \mathcal{R}\left( {{{\left( {WM} \right)}^{{k_2}}}} \right)$.
More results
of (\ref{CM-0})
  under some certain conditions can be found in
 \cite{Chen1993lma339,Morikuni2018siam1033,
 Toutounian2017amc343,Wang1989laa27,
 Wang2018book,Wei1998amc115,
 Wei2000jcam305}.

It is  well-known that
$b\in\R\left(M\right)$
if and only if
(\ref{CM-0}) is solvable.
Let
$b\in\R\left(M\right)$
and
the index of $M$ is 1,
then $x=M^\#b$ is the unique solution with
$b\in\R\left(M\right)$,
 \cite{Campbell2009book}.
It follows from
$ M^{\tiny\textcircled{\#}}
 =M^\#MM^\dag$
 that $M^\#b=M^{\tiny\textcircled{\#}}b$,
 \cite{Ma2019lmaonline}.
Furthermore,
the unique solution $x=M^{\tiny\textcircled{\#}}b$ is given by the Cramer's rule
\cite[Theorem 3.3]{Ma2019lmaonline}.

When $b\notin \R(M)$,
(\ref{CM-0})  is unsolvable, yet,
it has  least-squares solutions.
Motivated by the above mentioned work,
it is naturally to consider 
%
the least-squares solutions of (\ref{CM-0})
under the  certain
condition $x\in\R(M)$,
i.e.,
\begin{align}
\label{CM-1}
\|Mx-b\|_F= \min \mbox{\rm \ \ subject to \ \ }{x\in\R(M)},
\end{align}
where $M \in \mathbb{C}^{\mbox{\rm\footnotesize\texttt{CM}}}_{n }$,
${\rm rk}(M)=r < n$
and
$b \in \mathbb{C}^{n }$.

\section{Preliminaries}
\label{Preliminaries}

\begin{lemma}
{\rm (\cite
{Ben2003book})}
\label{CM-Lemma1-4}
Let $M \in \mathbb{C}^{n\times n}$ be idempotent.
Then
$M = P_{\R(M), \N(M)}$ with
$\R\left(M\right)\oplus  \N\left(M\right) =\mathbb{C}^n$.
On the contrary,
if
$\F\oplus  \G =\mathbb{ C}^n$,
then there exists an
idempotent $P_{\F, \G}$ such that
$\R\left(P_{\F, \G}\right)=\F$
and
$\N\left(P_{\F, \G}\right)=\G$.

Furthermore, $I-P_{\F, \G}=P_{\G, \F}$.
\end{lemma}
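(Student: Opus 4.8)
The plan is to establish the two implications separately and then read off the complementarity relation from a uniqueness observation. First I would assume $M$ is idempotent, so $M^2=M$, and obtain the direct-sum decomposition from the standard splitting $x = Mx + (x-Mx)$: the summand $Mx$ lies in $\R(M)$, while $M(x-Mx)=Mx-M^2x=0$ puts the summand $x-Mx$ in $\N(M)$, so $\mathbb{C}^n=\R(M)+\N(M)$. For directness I would take $y\in\R(M)\cap\N(M)$, write $y=Mz$, and compute $y=Mz=M^2z=My=0$; hence $\R(M)\oplus\N(M)=\mathbb{C}^n$. The same computation $Mz=M^2z$ shows $M$ restricts to the identity on $\R(M)$, and by definition it is zero on $\N(M)$, so $M$ coincides with the projector onto $\R(M)$ along $\N(M)$, i.e. $M=P_{\R(M),\N(M)}$.

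For the converse, given $\F\oplus\G=\mathbb{C}^n$, I would use that every $x$ has a unique representation $x=u+v$ with $u\in\F$ and $v\in\G$, and define $P_{\F,\G}x:=u$. Linearity of this map follows from uniqueness of the decomposition, and idempotency follows because $u\in\F$ already has decomposition $u=u+0$, so $P_{\F,\G}^2x=P_{\F,\G}u=u=P_{\F,\G}x$. One then checks directly $\R(P_{\F,\G})=\F$ and $\N(P_{\F,\G})=\G$. I would also record the uniqueness of such an idempotent, which is what makes the notation $P_{\F,\G}$ legitimate: any idempotent $Q$ with $\R(Q)=\F$ and $\N(Q)=\G$ acts as the identity on $\F$ (from $Qx=Q^2y=Qy=x$ when $x=Qy\in\F$) and as zero on $\G$, hence is determined on all of $\mathbb{C}^n$.

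Finally, for $I-P_{\F,\G}=P_{\G,\F}$, I would note that $\F\oplus\G=\mathbb{C}^n$ is symmetric in $\F$ and $\G$, so $P_{\G,\F}$ is already defined, and then observe that $I-P_{\F,\G}$ is idempotent, since $(I-P_{\F,\G})^2 = I-2P_{\F,\G}+P_{\F,\G}^2 = I-P_{\F,\G}$, and that applying it to $x=u+v$ yields $v$, so its range is $\G$ and its null space is $\F$; by the uniqueness just noted it must equal $P_{\G,\F}$. I do not expect a genuine obstacle here, as the lemma is classical and each step is a short direct verification; the only points that need a little care are the well-definedness/uniqueness of $P_{\F,\G}$ and the bookkeeping identifying the range and null space of $I-P_{\F,\G}$ as exactly $\G$ and $\F$ via the map $x=u+v\mapsto v$.
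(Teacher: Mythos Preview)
Your argument is correct and is the standard textbook proof of this classical fact. Note, however, that the paper itself does not supply a proof of this lemma at all: it is stated with a citation to Ben-Israel and Greville and used as a preliminary, so there is no ``paper's own proof'' to compare against beyond that reference.
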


\begin{lemma}
{\rm(\cite{Wang2018book})}
\label{CM-Lemma1-5}
Let $M \in \mathbb{C}^{n\times n}$.
Then $\mathrm{Ind}(M) = k$ if and only if
\begin{align}
\R\left(M^k\right)\oplus  \N\left(M^k\right) =\mathbb{ C}^n.
\end{align}
\end{lemma}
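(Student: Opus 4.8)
The plan is to prove the two implications separately, using two elementary facts about powers of a square matrix: the rank sequence $\mathrm{rk}(M^j)$ is non-increasing and becomes constant as soon as two consecutive terms coincide; and the rank--nullity identity $\mathrm{rk}(M^j)+\dim\N(M^j)=n$. I take $k\ge 1$ throughout, which is the case relevant here since $M$ is singular, and I read the displayed equivalence as: if $\mathrm{Ind}(M)=k$ then $\R(M^k)\oplus\N(M^k)=\mathbb{C}^n$, and conversely if $\R(M^k)\oplus\N(M^k)=\mathbb{C}^n$ then $\mathrm{Ind}(M)\le k$ (this last being the sharp statement, since exact equality would fail for, e.g., $M=0$ and $k=2$).

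\emph{Forward direction.} Suppose $\mathrm{Ind}(M)=k$, so $\mathrm{rk}(M^{k+1})=\mathrm{rk}(M^k)$, hence by stabilization $\mathrm{rk}(M^{2k})=\mathrm{rk}(M^k)$. Since $\N(M^k)\subseteq\N(M^{2k})$ always and, by rank--nullity, both spaces have dimension $n-\mathrm{rk}(M^k)$, they coincide: $\N(M^k)=\N(M^{2k})$. If $y\in\R(M^k)\cap\N(M^k)$, write $y=M^kx$; then $M^{2k}x=M^ky=0$, so $x\in\N(M^{2k})=\N(M^k)$ and therefore $y=M^kx=0$. Thus the intersection is trivial, and $\dim(\R(M^k)+\N(M^k))=\mathrm{rk}(M^k)+(n-\mathrm{rk}(M^k))=n$, i.e. $\R(M^k)\oplus\N(M^k)=\mathbb{C}^n$.

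\emph{Converse.} Suppose $\R(M^k)\oplus\N(M^k)=\mathbb{C}^n$, so in particular $\R(M^k)\cap\N(M^k)=\{0\}$. The subspace $\R(M^k)$ is $M$-invariant, since $M\R(M^k)=\R(M^{k+1})\subseteq\R(M^k)$, and the restriction $M|_{\R(M^k)}$ is injective: if $y\in\R(M^k)$ with $My=0$, then $y\in\N(M)\subseteq\N(M^k)$, hence $y\in\R(M^k)\cap\N(M^k)=\{0\}$. An injective endomorphism of a finite-dimensional space is surjective, so $\R(M^{k+1})=M\R(M^k)=\R(M^k)$, whence $\mathrm{rk}(M^{k+1})=\mathrm{rk}(M^k)$ and $\mathrm{Ind}(M)\le k$. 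Alternatively, from the decomposition one may invoke Lemma~\ref{CM-Lemma1-4} to produce the idempotent $P_{\R(M^k),\N(M^k)}$ and read off the same conclusion.

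The one step that is not pure bookkeeping is the stabilization fact: $\mathrm{rk}(M^{k+1})=\mathrm{rk}(M^k)$ implies $\mathrm{rk}(M^j)=\mathrm{rk}(M^k)$ for all $j\ge k$. I would prove it by noting that $\R(M^{k+1})\subseteq\R(M^k)$ with equal dimensions forces $\R(M^{k+1})=\R(M^k)$, and then iterating $\R(M^{j+1})=M\R(M^j)=\dots=M\R(M^k)=\R(M^{k+1})=\R(M^k)$. Beyond this, I anticipate no real obstacle.
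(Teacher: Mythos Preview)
Your argument is correct, including the careful reading of the converse as yielding $\mathrm{Ind}(M)\le k$ rather than equality. There is, however, nothing to compare it against: the paper does not prove Lemma~\ref{CM-Lemma1-5} at all, but simply quotes it from \cite{Wang2018book} as a preliminary fact. Your route---rank stabilization to get $\N(M^k)=\N(M^{2k})$ for the forward implication, and injectivity of $M$ on the invariant subspace $\R(M^k)$ for the converse---is the standard textbook argument and is essentially what one would find in the cited reference.
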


\begin{lemma} {\rm(\cite{Wang2018book})}
\label{CM-Lemma1-3}
Let $MXM=M$ and $XMX=X$. %
Then\begin{align}
\nonumber
X M = P_{\R(X), \N(M)}
\mbox{\  and \ }
MX = P_{{ \R}(M), \N(X) }.
\end{align}
\end{lemma}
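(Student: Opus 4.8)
The plan is to show that each of the products $XM$ and $MX$ is idempotent, to identify its range and its null space explicitly, and then to invoke Lemma~\ref{CM-Lemma1-4}.

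First I would check idempotency. From $MXM=M$ we get $(XM)^2 = X(MXM) = XM$, and similarly $(MX)^2 = (MXM)X = MX$. By Lemma~\ref{CM-Lemma1-4}, every idempotent matrix $E$ satisfies $\C^n = \R(E)\oplus\N(E)$ and $E = P_{\R(E),\N(E)}$; applying this to $E=XM$ and to $E=MX$ reduces the claim to computing the four subspaces $\R(XM)$, $\N(XM)$, $\R(MX)$, $\N(MX)$.

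Next I would pin down these subspaces by a pair of two-sided inclusions in each case. For the range of $XM$: clearly $\R(XM)\subseteq\R(X)$, while $X = XMX = (XM)X$ forces $\R(X)\subseteq\R(XM)$, so $\R(XM)=\R(X)$. For the null space of $XM$: obviously $\N(M)\subseteq\N(XM)$, and conversely if $XMx=0$ then $Mx = MXMx = M(XMx)=0$, giving $\N(XM)\subseteq\N(M)$; hence $\N(XM)=\N(M)$. The argument for $MX$ is the mirror image: $\R(MX)\subseteq\R(M)$ together with $M=MXM=(MX)M$ yields $\R(MX)=\R(M)$, and $\N(X)\subseteq\N(MX)$ together with the implication $MXx=0\Rightarrow Xx = XMXx = 0$ yields $\N(MX)=\N(X)$. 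Substituting these identifications back into $XM=P_{\R(XM),\N(XM)}$ and $MX=P_{\R(MX),\N(MX)}$ gives the two stated formulas.

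There is no serious obstacle here; the only point requiring care is that Lemma~\ref{CM-Lemma1-4} is what guarantees the complementarity $\C^n=\R(XM)\oplus\N(XM)$ (and likewise for $MX$), so that the projector symbol $P_{\cdot,\cdot}$ is meaningful in the first place — hence the idempotency verification must precede the subspace bookkeeping. Everything else is the standard manipulation of range and null-space inclusions for reflexive generalized inverses.
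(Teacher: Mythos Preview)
Your argument is correct and is the standard way to prove this fact about reflexive generalized inverses. Note, however, that the paper does not supply its own proof of this lemma: it is quoted from \cite{Wang2018book} without argument, so there is no in-paper proof to compare against. Your route via Lemma~\ref{CM-Lemma1-4} (idempotency first, then the four range/null-space identifications) is exactly the expected one and would be a perfectly acceptable self-contained justification.
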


\begin{lemma} {\rm(\cite{Wang2018book})}
\label{CM-Lemma1-6}
Let
$\F \oplus  \G =\mathbb{ C}^n$.
Then
\begin{itemize}
  \item[{\rm(1)}]
  $P_{\F,\G}M=M\Leftrightarrow \R(M)\subseteq \F$;

  \item[{\rm(2)}]
  $MP_{\F,\G}=M\Leftrightarrow \N(M)\supseteq \G$.
\end{itemize}
\end{lemma}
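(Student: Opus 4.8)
The plan is to derive both equivalences straight from the defining behaviour of the oblique projector $P_{\mathbf{F},\mathbf{G}}$ supplied by Lemma~\ref{CM-Lemma1-4}: it is idempotent with $\mathcal{R}(P_{\mathbf{F},\mathbf{G}})=\mathbf{F}$ and $\mathcal{N}(P_{\mathbf{F},\mathbf{G}})=\mathbf{G}$, so in particular $P_{\mathbf{F},\mathbf{G}}z=z$ for every $z\in\mathbf{F}$, and $I-P_{\mathbf{F},\mathbf{G}}=P_{\mathbf{G},\mathbf{F}}$. No deeper machinery is needed; the argument is just careful bookkeeping with ranges and null spaces.

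For part (1): for the forward implication, suppose $P_{\mathbf{F},\mathbf{G}}M=M$. Then every vector of the form $My$ equals $P_{\mathbf{F},\mathbf{G}}(My)\in\mathcal{R}(P_{\mathbf{F},\mathbf{G}})=\mathbf{F}$, so $\mathcal{R}(M)\subseteq\mathbf{F}$. Conversely, if $\mathcal{R}(M)\subseteq\mathbf{F}$, then for arbitrary $y\in\mathbb{C}^n$ the vector $My$ lies in $\mathbf{F}$, hence $P_{\mathbf{F},\mathbf{G}}(My)=My$; since $y$ is arbitrary this gives $P_{\mathbf{F},\mathbf{G}}M=M$. For part (2): rewrite $MP_{\mathbf{F},\mathbf{G}}=M$ equivalently as $M(I-P_{\mathbf{F},\mathbf{G}})=0$, that is $MP_{\mathbf{G},\mathbf{F}}=0$ by Lemma~\ref{CM-Lemma1-4}. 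This last equation says precisely that $\mathcal{R}(P_{\mathbf{G},\mathbf{F}})\subseteq\mathcal{N}(M)$, and since $\mathcal{R}(P_{\mathbf{G},\mathbf{F}})=\mathbf{G}$ it is equivalent to $\mathbf{G}\subseteq\mathcal{N}(M)$, i.e. $\mathcal{N}(M)\supseteq\mathbf{G}$. (Equivalently one could read off part (2) from part (1) applied to the idempotent $I-P_{\mathbf{F},\mathbf{G}}=P_{\mathbf{G},\mathbf{F}}$, using $MP_{\mathbf{F},\mathbf{G}}=M\iff MP_{\mathbf{G},\mathbf{F}}=0$.)

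I do not anticipate any real obstacle: the only points requiring a little care are (i) using that $P_{\mathbf{F},\mathbf{G}}$ restricts to the identity on all of $\mathbf{F}$, not merely that its range is $\mathbf{F}$, and (ii) invoking the complement identity $I-P_{\mathbf{F},\mathbf{G}}=P_{\mathbf{G},\mathbf{F}}$ to convert the two-sided statement of part (2) into a range/null-space inclusion. With these in hand the whole proof is a few lines.
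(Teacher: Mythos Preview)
Your argument is correct and complete: the forward and backward implications in (1) follow exactly as you describe from idempotency and $\mathcal{R}(P_{\mathbf{F},\mathbf{G}})=\mathbf{F}$, and your rewriting of (2) via $I-P_{\mathbf{F},\mathbf{G}}=P_{\mathbf{G},\mathbf{F}}$ cleanly reduces it to a range--null-space inclusion. The paper itself supplies no proof of this lemma---it is quoted from \cite{Wang2018book}---so there is nothing to compare against; your short self-contained derivation is entirely appropriate here.
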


 \begin{lemma}
{\rm(\cite{Baksalary2010lma681,Wang2016laa289})}
 Let
$M\in\mathbb{C}^{\mbox{\rm\footnotesize\texttt{CM}}}_{n }$
  with
${\rm rk}(M)=r$.
Then
there exists  a   unitary matrix $V$
 such that
\begin{align}
\label{CM-10}
M = V\left[ {{\begin{matrix}
 T   & S   \\
 0   & 0   \\
\end{matrix} }} \right]V^\ast,
\end{align}
where $T$ is nonsingular.
Furthermore,
\begin{align}
\label{CM-11}
M^{\tiny\textcircled{\#}}
&
=
 V\left[ {{\begin{matrix}
 {T ^{ - 1}}   & 0   \\
 0   & 0   \\
\end{matrix} }} \right]V^\ast .
\end{align}
\end{lemma}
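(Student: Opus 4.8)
The plan is to build the unitary matrix $V$ explicitly from an orthonormal basis adapted to $\R(M)$, read off the block shape, invoke the index hypothesis only to force nonsingularity of the leading block $T$, and finally obtain (\ref{CM-11}) by directly checking the three equations that define the core inverse.

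First I would choose $V=\left[\,V_1\ \ V_2\,\right]$ unitary so that the columns of $V_1\in\C^{n\times r}$ form an orthonormal basis of $\R(M)$ and the columns of $V_2\in\C^{n\times(n-r)}$ form an orthonormal basis of $\R(M)^{\perp}=\N(M^\ast)$. Since every column of $M$ lies in $\R(M)$, we get $V_2^\ast M=0$, hence $V^\ast M=\left[{\begin{smallmatrix}V_1^\ast M\\ 0\end{smallmatrix}}\right]$ and
\begin{align}\nonumber
V^\ast M V=\left[ {{\begin{matrix} V_1^\ast M V_1 & V_1^\ast M V_2\\ 0 & 0\end{matrix} }} \right]=:\left[ {{\begin{matrix} T & S\\ 0 & 0\end{matrix} }} \right].
\end{align}
Multiplying on the left by $V$ and on the right by $V^\ast$ and using $VV^\ast=V^\ast V=I_n$ then yields (\ref{CM-10}).

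Next I would show that the $r\times r$ block $T$ is nonsingular; this is the only step where the hypothesis $M\in\mathbb{C}^{\mbox{\rm\footnotesize\texttt{CM}}}_n$ enters, and the one place a genuine (if short) argument is required rather than mere block bookkeeping. Because ${\rm Ind}(M)\le 1$, we have ${\rm rk}(M^2)={\rm rk}(M)=r$ (immediate from the definition of the index, or from Lemma~\ref{CM-Lemma1-5} with $k=1$). Squaring the block form gives
\begin{align}\nonumber
M^2=V\left[ {{\begin{matrix} T^2 & TS\\ 0 & 0\end{matrix} }} \right]V^\ast,
\end{align}
so $r={\rm rk}(M^2)={\rm rk}\!\left(T\left[\,T\ \ S\,\right]\right)\le{\rm rk}(T)\le r$, whence ${\rm rk}(T)=r$ and $T$ is invertible.

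Finally, set $X:=V\left[{\begin{smallmatrix}T^{-1}&0\\ 0&0\end{smallmatrix}}\right]V^\ast$. Using $V^\ast V=I_n$ one computes $MX=V\left[{\begin{smallmatrix}I_r&0\\ 0&0\end{smallmatrix}}\right]V^\ast$, which is Hermitian, so $(MX)^\ast=MX$; moreover $MXM=V\left[{\begin{smallmatrix}I_r&0\\ 0&0\end{smallmatrix}}\right]\left[{\begin{smallmatrix}T&S\\ 0&0\end{smallmatrix}}\right]V^\ast=M$ and $MX^2=V\left[{\begin{smallmatrix}T&S\\ 0&0\end{smallmatrix}}\right]\left[{\begin{smallmatrix}T^{-2}&0\\ 0&0\end{smallmatrix}}\right]V^\ast=X$. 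Thus $X$ satisfies $MXM=M$, $MX^2=X$ and $(MX)^\ast=MX$, so by uniqueness of the core inverse $M^{\tiny\textcircled{\#}}=X$, which is exactly (\ref{CM-11}). Alternatively one could invoke the identity $M^{\tiny\textcircled{\#}}=M^\# M M^\dag$ and evaluate the three factors in block form, but the direct verification above is shorter.
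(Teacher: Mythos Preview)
Your proof is correct. Note, however, that the paper does not supply its own proof of this lemma: it is quoted as a known result from \cite{Baksalary2010lma681,Wang2016laa289}, so there is no in-paper argument to compare against. What you have written is a clean, self-contained derivation in the spirit of those references: the Hartwig--Spindelb\"ock-type decomposition is obtained by choosing the unitary $V$ so that its first $r$ columns span $\R(M)$, the index-one hypothesis is used exactly once to force ${\rm rk}(T)=r$ via ${\rm rk}(M^2)={\rm rk}(M)$, and (\ref{CM-11}) is then verified directly against the three defining equations of the core inverse. The alternative you mention at the end, computing $M^\# M M^\dag$ in block form, is closer to how \cite{Baksalary2010lma681} originally presented the formula, but your direct verification is indeed shorter and avoids having to write out $M^\#$ and $M^\dag$ separately.
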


\section{Main Results}
\label{Main-Results}
\begin{theorem}
\label{CM-Theorem-1}
Let $M \in \mathbb{C}^{\mbox{\rm\footnotesize\texttt{CM}}}_{n }$
and
$b \in \mathbb{C}^{n }$.
Then
\begin{align}
\label{CM-00}
x=
 {M^{\tiny\textcircled{\#}}}b
\end{align}
is the unique solution of (\ref{CM-1}).
\end{theorem}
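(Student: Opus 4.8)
The plan is to use the unitary decomposition~\eqref{CM-10} to reduce the problem to a block form and then solve it explicitly. Write $M = V\left[\begin{matrix} T & S \\ 0 & 0 \end{matrix}\right]V^\ast$ with $T$ nonsingular, and set $y = V^\ast x$, $c = V^\ast b$, partitioned conformally as $y = \left[\begin{matrix} y_1 \\ y_2 \end{matrix}\right]$, $c = \left[\begin{matrix} c_1 \\ c_2 \end{matrix}\right]$. Since $V$ is unitary, $\|Mx-b\|_F = \|V^\ast(Mx-b)\|_F = \left\|\left[\begin{matrix} Ty_1 + Sy_2 - c_1 \\ -c_2 \end{matrix}\right]\right\|_F$, so the objective splits as $\|Ty_1+Sy_2-c_1\|_F^2 + \|c_2\|_F^2$. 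The second term is a constant, so minimization amounts to making $Ty_1+Sy_2 = c_1$, which (because $T$ is invertible) is achievable for \emph{any} $y_2$. Hence the unconstrained minimum value is $\|c_2\|_F^2$, attained on an affine subspace of solutions.

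Next I would impose the constraint $x \in \R(M)$. From~\eqref{CM-10}, $\R(M) = V\,\R\!\left(\left[\begin{matrix} T & S \\ 0 & 0 \end{matrix}\right]\right)$; since $\left[\begin{matrix} T & S \end{matrix}\right]$ has full row rank (as $T$ is nonsingular), the column space of the block matrix is exactly $\left\{\left[\begin{matrix} u \\ 0 \end{matrix}\right]\right\}$. Thus $x \in \R(M)$ translates to $y_2 = 0$. Combined with the minimization condition $Ty_1 + Sy_2 = c_1$, this forces $y_1 = T^{-1}c_1$ and $y_2 = 0$ uniquely. Translating back, $x = Vy = V\left[\begin{matrix} T^{-1}c_1 \\ 0 \end{matrix}\right] = V\left[\begin{matrix} T^{-1} & 0 \\ 0 & 0 \end{matrix}\right]V^\ast b = M^{\tiny\textcircled{\#}}b$ by~\eqref{CM-11}. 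This establishes both existence and uniqueness simultaneously.

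An alternative, more intrinsic route avoids the block form: observe that $MM^{\tiny\textcircled{\#}} = P_{\R(M),\N(M^\ast)}$ is the orthogonal projector onto $\R(M)$ (this follows from the Hermitian idempotent property of $MM^{\tiny\textcircled{\#}}$ together with Lemma~\ref{CM-Lemma1-3}), so for any $x \in \R(M)$ one has $Mx - b = (Mx - MM^{\tiny\textcircled{\#}}b) + (MM^{\tiny\textcircled{\#}}b - b)$ with the two summands orthogonal, whence $\|Mx-b\|_F^2 = \|Mx - MM^{\tiny\textcircled{\#}}b\|_F^2 + \|(I - MM^{\tiny\textcircled{\#}})b\|_F^2$; the first term vanishes iff $Mx = MM^{\tiny\textcircled{\#}}b = MP_{\R(M),\N(M)}b$, and since $M^{\tiny\textcircled{\#}}b \in \R(M)$ and $M$ restricted to $\R(M)$ is injective (index one, so $\R(M)\oplus\N(M) = \mathbb{C}^n$ by Lemma~\ref{CM-Lemma1-5}), this pins down $x = M^{\tiny\textcircled{\#}}b$.

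I expect the main obstacle to be the verification that $MM^{\tiny\textcircled{\#}}$ is the \emph{orthogonal} projector onto $\R(M)$ — equivalently that $\N(MM^{\tiny\textcircled{\#}}) = \R(M)^\perp = \N(M^\ast)$ — which is exactly the point where the defining relation $(MX)^\ast = MX$ of the core inverse (as opposed to the mere group inverse) is used; and, in the block approach, the parallel point is checking that the third defining equation holds for the candidate, i.e. confirming consistency of~\eqref{CM-11} with the core-inverse axioms rather than taking it for granted. The injectivity of $M$ on $\R(M)$, needed for uniqueness, is a routine consequence of $\mathrm{Ind}(M)\le 1$ via Lemma~\ref{CM-Lemma1-5}. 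I would present the block-matrix argument as the main proof for concreteness, since it makes both the minimization and the uniqueness transparent in one stroke.
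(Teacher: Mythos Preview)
Your primary block-matrix argument is essentially the paper's own proof: both use the decomposition~\eqref{CM-10} to split $\|Mx-b\|_F^2$ into a term that can be driven to zero (by invertibility of $T$) plus the constant lower-block contribution, and then read off the minimizer as $M^{\tiny\textcircled{\#}}b$ via~\eqref{CM-11}. The only cosmetic difference is in how the constraint $x\in\R(M)$ is imposed: the paper writes $x=My$ for an auxiliary $y\in\C^n$, so that $Mx=M^2y$ and the top block becomes $T^2y_1+TSy_2-b_1$, and then checks that \emph{every} minimizing $y$ produces the same $x=My=V\left[\begin{smallmatrix}T^{-1}b_1\\0\end{smallmatrix}\right]$; you instead characterize $\R(M)$ directly by $y_2=0$, which is slightly cleaner but the same idea.

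Your alternative intrinsic argument via the orthogonal projector $MM^{\tiny\textcircled{\#}}=P_{\R(M)}$ and the injectivity of $M$ on $\R(M)$ is correct and does not appear in the paper; it has the merit of isolating exactly where the Hermitian axiom $(MX)^\ast=MX$ (orthogonality of the Pythagorean split) and the index-one hypothesis (uniqueness via Lemma~\ref{CM-Lemma1-5}) are each used, whereas the block computation packages both into the single fact that $T$ is invertible.
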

\begin{proof}
From
$x \in \R\left( M \right)$,
it follows  that there exists $y \in {\C^n}$ for which $x = My$.
Let the decomposition of $M$ be as in (\ref{CM-10}).
Denote
\begin{align}
{V^ * }y = \left[ {\begin{matrix}
{{y_1}}\\
{{y_2}}
\end{matrix}} \right],
\
{V^ * }b = \left[ {\begin{matrix}
{{b_1}}\\
{{b_2}}
\end{matrix}} \right]
\
\mbox{\rm and }
\
 {M^ {\tiny\textcircled{\#}} }b = V\left[ {\begin{matrix}
{{T^{ - 1}}{b_1}}\\
0
\end{matrix}} \right],
\end{align}
where $y_1$,
${b_1}$ and ${{T^{ - 1}}{b_1}}\in\C^{{\rm rk}(M)}$.
It follows that
\begin{align}
\nonumber
\left\| {Mx - b} \right\|_F^2
&
 =
  \left\| {\left[
{\begin{array}{*{20}{c}}
{{T^2}{y_1} + TS{y_2} - {b_1}}\\
{{-b_2}}
\end{array}} \right]} \right\|_F^2
\\
\nonumber
&
=
\left\| {{T^2}{y_1} + TS{y_2} - {b_1}} \right\|_F^2 + \left\| {{b_2}}
\right\|_F^2.
\end{align}
Since $T$ is invertible,
we have
$\mathop {\min }\limits_{{y_1},{y_2}} \left\|
{{T^2}{y_1} + TS{y_2} - {b_1}} \right\|_F^2 = 0$,
when
\begin{align*}
{y_1} = {T^{ - 2}}{b_1} - {T^{ - 1}}S{y_2}.
\end{align*}
Therefore,
\begin{align*}
x
&
 =
  My
  =
  V\left[ {\begin{matrix}
T&S\\
0&0
\end{matrix}} \right]{V^ * }y
=
V\left[ {\begin{matrix}
{T{y_1} + S{y_2}}\\
0
\end{matrix}} \right]
\\
&
=
 V\left[ {\begin{matrix}
{{T^{ - 1}}{b_1}}\\
0
\end{matrix}} \right]
 =
 {M^ {\tiny\textcircled{\#}} }b,
\end{align*}
that is,
(\ref{CM-00})
is  the unique solution of
(\ref{CM-1}).
\end{proof}

\bigskip

{
When $M\in \C^{n\times n}$ is nonsingular,
it is  well-known that
the solution of (\ref{CM-0}) is unique and
$x=M^{-1}b$.
Let $x=\left(x_1,x_2,...,x_n\right)^T$.
Then
 \begin{align}
 \label{CM-16}
{x_i}
=\frac{{\det \left(M\left( {i \to b} \right) \right)} }{\det (M)} ,
\quad
i = 1,2, \ldots ,n,
\end{align}
is called the Cramer's rule for solving (\ref{CM-0}).
In \cite{Ben1982laa223},
Ben-Israel
gets a  Cramer's rule
for obtaining the least-norm solution of the consistent linear system (\ref{CM-0}),
 \begin{align}
 \nonumber
{x_i}
=\frac{{\det \left(\left[\begin{matrix}
M\left( {i \to b} \right) & U\\
V^\ast(i \to 0) &0
\end{matrix}\right]\right)} }{\det
\left(\left[\begin{matrix}
M & U\\
V^\ast &0
\end{matrix}\right]\right)},
\quad
i = 1,2, \ldots ,n,
\end{align}
where
$U$ are $V$ are of full column rank,
 $\mathcal{R}\left(U\right)=\mathcal{N}\left(M^\ast\right)$
 and
 $\mathcal{R}\left(V\right)=\mathcal{N}\left(M\right)$.
In \cite{Wang1989laa27},
  Wang gives a Cramer's rule for
the unique solution $x \in \mathcal{R}\left( {{M^k}} \right)$ of (\ref{CM-0}),
where $b\in\R\left(M^k\right)$ and  ${\rm{Ind}}(M) = k$.
In \cite{Ji2005laa183},
Ji proposes two
new condensed Cramer's rules for the
unique solution $x \in \mathcal{R}\left( {{M^k}} \right)$ of (\ref{CM-0}),
where $b\in\R\left(M^k\right)$ and  ${\rm{Ind}}(M) = k$.
Furthermore,
in \cite{Ji2012laa2173},
Ji obtains
a new condensed Cramer's rule of Werner
for minimal-norm least-squares solution of (\ref{CM-0}).
More details
of Cramer's rules for finding restricted solutions of (\ref{CM-0})
can be found in
 \cite{Ben2003book,
 Kyrchei2012amc6375,Kyrchei2013asmc7632,Kyrchei2015book,
 Kyrchei2017amc1,
 Wang2018book,Wang2005amc329
 }.
In the following
Theorem \ref{CM-Theorem-3}
and
Theorem \ref{CM-Theorem-4},
we will give two Cramer's rules for
the unique solution of (\ref{CM-1}).
}

First of all,
we give the following two lemmas
to prepare for
a  Cramer's ruler
for the core inverse in Theorem \ref{CM-Theorem-3}.

\begin{lemma}
\label{CM-Lemma-M-2}
Let
$M $ be as in (\ref{CM-10}),
and let
$L\in \mathbb{C}^{n \times \left( {n - r} \right)}$
with
${\rm rk}(L)={n - r}$
and
$\R\left(L\right)=\N\left( {{M^\ast}} \right)$.
Then
\begin{align}
\label{CM-9}
M^{\tiny\textcircled{\#}}M
+
\left(I_n-M^{\tiny\textcircled{\#}}M \right)
L\left(L^\ast L\right)^{-1}L^\ast
 =
 I_{n}.
\end{align}
\end{lemma}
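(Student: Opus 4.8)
The plan is to exploit the unitary block form (\ref{CM-10}) of $M$ and the explicit formula (\ref{CM-11}) for $M^{\tiny\textcircled{\#}}$ to reduce the identity (\ref{CM-9}) to a $2\times 2$ block computation. First I would compute the idempotent $M^{\tiny\textcircled{\#}}M$: multiplying (\ref{CM-11}) by (\ref{CM-10}) gives
\begin{align}
\nonumber
M^{\tiny\textcircled{\#}}M
=
V\left[ {\begin{matrix}
I_r & T^{-1}S\\
0 & 0
\end{matrix}} \right]V^\ast,
\qquad
I_n-M^{\tiny\textcircled{\#}}M
=
V\left[ {\begin{matrix}
0 & -T^{-1}S\\
0 & I_{n-r}
\end{matrix}} \right]V^\ast .
\end{align}
So the first summand in (\ref{CM-9}) already supplies the top-left block $I_r$; what remains is to show that the second summand contributes exactly $V\,\mathrm{diag}(0,I_{n-r})\,V^\ast$ minus the spurious top-right block $-T^{-1}S$ carried by $I_n - M^{\tiny\textcircled{\#}}M$, i.e. that $(I_n - M^{\tiny\textcircled{\#}}M)\,L(L^\ast L)^{-1}L^\ast = V\,\mathrm{diag}(0,I_{n-r})\,V^\ast + V\,\mathrm{diag}(T^{-1}S\text{-block})\,V^\ast$ so that everything telescopes to $I_n$. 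The cleanest route is to identify $L(L^\ast L)^{-1}L^\ast$ as the orthogonal projector $P_{\N(M^\ast)}$ onto $\N(M^\ast)$, since $L$ has full column rank with $\R(L)=\N(M^\ast)$.

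The key step is therefore to compute $\N(M^\ast)$ from the block form. From (\ref{CM-10}), $M^\ast = V\,\mathrm{diag}\!\left(\begin{smallmatrix}T^\ast & 0\\ S^\ast & 0\end{smallmatrix}\right)V^\ast$, and $M^\ast z = 0$ (with $V^\ast z = (z_1,z_2)^T$) forces $T^\ast z_1 = 0$ and $S^\ast z_1 = 0$, hence $z_1=0$ since $T$ is nonsingular; thus $\N(M^\ast) = V\big(\{0\}\oplus\C^{n-r}\big)$, and the orthogonal projector onto it is exactly
\begin{align}
\nonumber
L(L^\ast L)^{-1}L^\ast
=
V\left[ {\begin{matrix}
0 & 0\\
0 & I_{n-r}
\end{matrix}} \right]V^\ast .
\end{align}
(One should check this last equality does not depend on the particular $L$: any two full-column-rank matrices with the same range differ by an invertible right factor, which cancels in $L(L^\ast L)^{-1}L^\ast$, so it equals the orthogonal projector onto $\N(M^\ast)$ regardless.) Then
\begin{align}
\nonumber
\left(I_n-M^{\tiny\textcircled{\#}}M \right)L(L^\ast L)^{-1}L^\ast
=
V\left[ {\begin{matrix}
0 & -T^{-1}S\\
0 & I_{n-r}
\end{matrix}} \right]
\left[ {\begin{matrix}
0 & 0\\
0 & I_{n-r}
\end{matrix}} \right]V^\ast
=
V\left[ {\begin{matrix}
0 & -T^{-1}S\\
0 & I_{n-r}
\end{matrix}} \right]V^\ast,
\end{align}
and adding $M^{\tiny\textcircled{\#}}M = V\,\mathrm{diag}\!\left(\begin{smallmatrix}I_r & T^{-1}S\\ 0 & 0\end{smallmatrix}\right)V^\ast$ gives $V I_n V^\ast = I_n$, as claimed.

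The main obstacle — such as it is — is the bookkeeping around the off-diagonal block $T^{-1}S$: one must be careful that $M^{\tiny\textcircled{\#}}M$ is \emph{not} simply $\mathrm{diag}(I_r,0)$ (that is $MM^{\tiny\textcircled{\#}}$ instead), so the cancellation of the $-T^{-1}S$ term against the $+T^{-1}S$ in $M^{\tiny\textcircled{\#}}M$ is the crux of why the identity holds. Alternatively, one could avoid blocks entirely and argue structurally: by Lemma \ref{CM-Lemma1-3}, $M^{\tiny\textcircled{\#}}M = P_{\R(M^{\tiny\textcircled{\#}}),\N(M)}$ is the (oblique) projector with range $\R(M^{\tiny\textcircled{\#}})=\R(M)$ along $\N(M)$, while $L(L^\ast L)^{-1}L^\ast = P_{\N(M^\ast)} = P_{\R(M)^\perp}$ is orthogonal projection onto $\R(M)^\perp$; since $(I_n - M^{\tiny\textcircled{\#}}M)$ kills $\R(M)$ and restricts to the identity on a complement, and $\R(M)\oplus\R(M)^\perp = \C^n$, one checks that the sum acts as the identity on each summand. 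I would present the block computation as the primary proof since it is the most transparent and self-contained given the lemmas already available.
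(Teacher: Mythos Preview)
Your block computation is correct and complete: $M^{\tiny\textcircled{\#}}M = V\left[\begin{smallmatrix}I_r & T^{-1}S\\ 0 & 0\end{smallmatrix}\right]V^\ast$, the orthogonal projector $L(L^\ast L)^{-1}L^\ast$ onto $\N(M^\ast)$ is $V\left[\begin{smallmatrix}0 & 0\\ 0 & I_{n-r}\end{smallmatrix}\right]V^\ast$, and the two summands add to $I_n$ exactly as you show. This is, however, a genuinely different route from the paper's. The paper never touches the block form (\ref{CM-10})--(\ref{CM-11}); instead it argues structurally with oblique projectors: it sets $M_1 = I_n - M^{\tiny\textcircled{\#}}M$ and $M_2 = L(L^\ast L)^{-1}L^\ast$, uses Lemmas \ref{CM-Lemma1-4}--\ref{CM-Lemma1-6} to identify $M^{\tiny\textcircled{\#}}M = P_{\R(M),\N(M)}$ and $M_1 = P_{\N(M),\R(M)}$, verifies $M_1M_2M_1 = M_1$ and $M_2M_1M_2 = M_2$ so that $M_1M_2 = P_{\R(M_1),\N(M_2)} = P_{\N(M),\R(M)}$, and concludes via $P_{\R(M),\N(M)} + P_{\N(M),\R(M)} = I_n$. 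In other words, the ``alternative'' structural argument you sketch in your final paragraph \emph{is} essentially the paper's proof. Your primary block approach buys concreteness and makes the cancellation of the $T^{-1}S$ term completely explicit, at the cost of depending on the particular decomposition (\ref{CM-10}); the paper's projector argument is coordinate-free and leans on the preparatory lemmas, but the identification of ranges and null spaces there requires more care.
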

\begin{proof}
Let
$M $ be as in (\ref{CM-10}),
applying Lemma \ref{CM-Lemma1-5},
we see that
\begin{align}
\label{CM-7}
\R\left(M \right)\oplus  \N\left(M \right) =\mathbb{ C}^n.
\end{align}

Denote
$M_1=I_n-M^{\tiny\textcircled{\#}}M $
and
$M_2=L\left(L^\ast L\right)^{-1}L^\ast $.

Applying
Lemma \ref{CM-Lemma1-4},
Lemma \ref{CM-Lemma1-3} and
$M^{\tiny\textcircled{\#}}M=M^\#M$,
we have
\begin{align}
\label{CM-8}
M^{\tiny\textcircled{\#}}M &= P_{\R(M), \N(M )},
\\
\label{CM-03}
M_1=I-M^{\tiny\textcircled{\#}}M &= P_{\N(M), \R(M )}.
\end{align}
Since
$\left(L\left(L^\ast L\right)^{-1}\right)
L^\ast
\left(L\left(L^\ast L\right)^{-1}\right)
=L\left(L^\ast L\right)^{-1}$
and
$L^\ast
\left(L\left(L^\ast L\right)^{-1}\right)
L^\ast
=L^\ast$,
applying
Lemma \ref{CM-Lemma1-3},
we obtain
\begin{align}
\label{CM-04}
M_2
&
= P_{\R(L), \N\left(L^\ast\right)}
= P_{\N(M), \R(M)}.
\end{align}
Since
$\R\left(L\right)=\N\left( {{M^\ast}} \right)$,
we obtain
$M_1M_2=M_2$, $M_2^2=M_2$,
\begin{align}
\label{CM-01}
M_2M_1M_2
&
=M_2
\mbox{ \  and  \ }
M_1M_2M_1
=M_1.
\end{align}
Using  Lemma \ref{CM-Lemma1-3}
to (\ref{CM-01}),
we have
\begin{align}
\label{CM-05}
M_1M_2
=
 P_{\R\left(M_1 \right),
 \N\left(M_2\right)}.
\end{align}
Applying (\ref{CM-03}) and (\ref{CM-04})
to (\ref{CM-05}),
we obtain
\begin{align}
\label{CM-02}
 M_1M_2&
= P_{\N(M), \R(M)}.
\end{align}

Therefore,
applying Lemma \ref{CM-Lemma1-4}, (\ref{CM-8}) 
and (\ref{CM-02}),
we gain
\begin{align}\nonumber
M^{\tiny\textcircled{\#}}M
+
 M_1M_2&
=P_{\R(M), \N(M )}+ P_{\N(M), \R(M)}=I_n,
\end{align}
i.e., (\ref{CM-9}).
\end{proof}

\bigskip

{
In \cite[Theorem 3.2 and Theorem 3.3]{Ma2019lmaonline},
let $M \in \mathbb{C}^{\mbox{\rm\footnotesize\texttt{CM}}}_{n }$,
$b \in \mathbb{C}^{n }$
and
$b\in \mathcal{R}(M)$,
and
let $M_b$ and $M_c$ be of the full column ranks with
$\mathcal{N}(M^\ast)=\mathcal{R}(M_b)$
and
$\mathcal{N}(M_c^\ast)=\mathcal{R}(M)$.
Then
\begin{align*}
\left[ {\begin{matrix}
M      &M_b\\
M_c^\ast &0
\end{matrix}} \right]
\end{align*}
is invertible
and
the unique solution $x=^{\tiny\textcircled{\#}}b$
of (\ref{CM-0})
satisfying
\begin{align*}
{x_i} = {{\det \left(\left[ {\begin{matrix}
{M\left( {i \to b} \right)}&M_b\\
{M_c^\ast\left( {i \to 0} \right)}&0
\end{matrix}}\right] \right)} \mathord{\left/
{\det \left(\left[ {\begin{matrix}
M      &M_b\\
M_c^\ast &0
\end{matrix}} \right]\right)}\right.}},
\end{align*}
where $i = 1,2, \ldots ,n$.
In the following Lemma \ref{CM-lemma-M-1}
and Theorem \ref{CM-Theorem-3},
we give the unique least-squares solution of  (\ref{CM-1})
in a similar way under weaker conditions.
}

\begin{lemma}
\label{CM-lemma-M-1}
Let $M$ and $L$ be as in Lemma \ref{CM-Lemma-M-2}.
Then  \begin{align}
\label{CM-3}
G
= 
\left[ {\begin{matrix}
M      &L\\
L^\ast &0
\end{matrix}} \right]
\end{align}
 is invertible
and
\begin{align}
\label{CM-4}
G^{-1}
=
 \left[ {\begin{matrix}
M^{\tiny\textcircled{\#}}
&
\left(I_n-M^{\tiny\textcircled{\#}}M \right)
L\left(L^\ast L\right)^{-1}  \\
\left(L^\ast L\right)^{-1}  L^\ast    &0
\end{matrix}} \right].
\end{align}
\end{lemma}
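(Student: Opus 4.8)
The plan is to verify directly that the claimed matrix in \eqref{CM-4} is a two-sided inverse of $G$, since invertibility will then follow automatically. First I would compute the product $G\,G^{-1}$ blockwise. The $(1,1)$ block is $M M^{\tiny\textcircled{\#}} + L\left(L^\ast L\right)^{-1}L^\ast$; here I would use that $M M^{\tiny\textcircled{\#}} = P_{\R(M),\N(M^\ast)}$ (this follows from Lemma \ref{CM-Lemma1-3} together with the defining equations $MXM=M$, $XMX=X$ of the core inverse, noting $\N(M^{\tiny\textcircled{\#}})=\N(M^\ast)$ because $MX$ is Hermitian and idempotent with range $\R(M)$), while $L\left(L^\ast L\right)^{-1}L^\ast = P_{\R(L),\N(L^\ast)} = P_{\N(M^\ast),\R(M)}$ as was already shown in the proof of Lemma \ref{CM-Lemma-M-2} (equation \eqref{CM-04}). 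Since $\R(M)\oplus\N(M^\ast)=\C^n$ (apply $\R(M)=\R(MM^\ast)$ type reasoning, or simply that $M$ and $M^\ast$ have the same rank and $\R(L)=\N(M^\ast)$ is a complement of $\R(M)$), Lemma \ref{CM-Lemma1-4} gives $P_{\R(M),\N(M^\ast)}+P_{\N(M^\ast),\R(M)}=I_n$, so the $(1,1)$ block is $I_n$.

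Next, for the $(1,2)$ block of $G\,G^{-1}$ I get $M\left(I_n-M^{\tiny\textcircled{\#}}M\right)L\left(L^\ast L\right)^{-1} + L\cdot 0$; since $M M^{\tiny\textcircled{\#}}M = M$ we have $M\left(I_n-M^{\tiny\textcircled{\#}}M\right)=0$, so this block vanishes. The $(2,1)$ block is $L^\ast M^{\tiny\textcircled{\#}} + 0$; here I would use $\R\left(M^{\tiny\textcircled{\#}}\right)=\R(M)$ (immediate from $MX^2=X$, so $\R(X)\subseteq\R(M)$, with equality by rank) together with $\R(L)=\N(M^\ast)$, which gives $L^\ast M^{\tiny\textcircled{\#}}=0$ since the columns of $M^{\tiny\textcircled{\#}}$ lie in $\R(M)=\N(L^\ast)^\perp$... more carefully, $L^\ast$ annihilates $\R(M)$ because $\R(L)=\N(M^\ast)$ means $\N(L^\ast)\supseteq\R(M)$; hence $L^\ast M^{\tiny\textcircled{\#}}=0$. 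Finally the $(2,2)$ block is $L^\ast\left(I_n-M^{\tiny\textcircled{\#}}M\right)L\left(L^\ast L\right)^{-1}$. Using \eqref{CM-9} from Lemma \ref{CM-Lemma-M-2} in the form $\left(I_n-M^{\tiny\textcircled{\#}}M\right)L\left(L^\ast L\right)^{-1}L^\ast = I_n - M^{\tiny\textcircled{\#}}M$, I would left-multiply by $L^\ast$ and right-multiply by $L$: since $L^\ast M^{\tiny\textcircled{\#}}M$ involves $L^\ast$ hitting $\R(M^{\tiny\textcircled{\#}}M)=\R(M)$, which it annihilates, one obtains $L^\ast\left(I_n-M^{\tiny\textcircled{\#}}M\right)L = L^\ast L$, whence the $(2,2)$ block is $I_{n-r}$.

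Having shown $G\,G^{-1}=I_{2n-r}$, I would either run the analogous computation for $G^{-1}G$ or simply note that for square matrices a one-sided inverse is two-sided, concluding that $G$ is invertible with the stated inverse. I expect the main obstacle to be bookkeeping the several idempotents and their range/null-space identities correctly — in particular pinning down $\N\left(M^{\tiny\textcircled{\#}}\right)=\N(M^\ast)$ and $\R\left(M^{\tiny\textcircled{\#}}\right)=\R(M)$ and the complementarity $\R(M)\oplus\N(M^\ast)=\C^n$ — rather than any deep difficulty; once Lemma \ref{CM-Lemma-M-2} is in hand, the $(1,1)$ and $(2,2)$ blocks are the only ones requiring real work, and both reduce to it.
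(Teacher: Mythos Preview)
Your argument is correct, but it takes a slightly different route from the paper. The paper multiplies in the \emph{other} order: it verifies that the claimed matrix times $G$ (i.e., $G^{-1}G$) equals $I_{2n-r}$. With that choice the $(1,1)$ block is literally $M^{\tiny\textcircled{\#}}M + \left(I_n-M^{\tiny\textcircled{\#}}M\right)L\left(L^\ast L\right)^{-1}L^\ast$, which is $I_n$ immediately by \eqref{CM-9}; the $(1,2)$ block is $M^{\tiny\textcircled{\#}}L=0$ (since $\R(L)=\N(M^\ast)=\N(M^{\tiny\textcircled{\#}})$), the $(2,1)$ block is $\left(L^\ast L\right)^{-1}L^\ast M=0$ (since $L^\ast M=0$), and the $(2,2)$ block is $\left(L^\ast L\right)^{-1}L^\ast L=I_{n-r}$ trivially. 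So Lemma~\ref{CM-Lemma-M-2} dispatches the only nontrivial block in one line.

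By computing $GG^{-1}$ instead, you trade that single appeal to \eqref{CM-9} for two separate projector identities: $MM^{\tiny\textcircled{\#}}+L\left(L^\ast L\right)^{-1}L^\ast=I_n$ for the $(1,1)$ block, and $L^\ast M^{\tiny\textcircled{\#}}M=0$ for the $(2,2)$ block. Both are fine, but note that your $(2,2)$ argument does not actually need \eqref{CM-9} at all --- once you have $L^\ast M^{\tiny\textcircled{\#}}=0$ from the $(2,1)$ step, $L^\ast\left(I_n-M^{\tiny\textcircled{\#}}M\right)L=L^\ast L$ is immediate. The paper's ordering is what lets Lemma~\ref{CM-Lemma-M-2} earn its keep; your ordering works but makes that lemma largely incidental.
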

\begin{proof}
Since
$\R\left(L\right)=\N\left( {{M^\ast}} \right)$,
we have
$M^{\tiny\textcircled{\#}}   L=0$
and
$\left(L^\ast L\right)^{-1}L^\ast M=0$.
Furthermore,
applying (\ref{CM-9}),
we have
\begin{align*}
&
 \left[ {\begin{matrix}
M^{\tiny\textcircled{\#}}
&
\left(I_n-M^{\tiny\textcircled{\#}}M \right)
L\left(L^\ast L\right)^{-1}
 \\
\left(L^\ast L\right)^{-1}  L^\ast    &0
\end{matrix}} \right]
\left[ {\begin{matrix}
M      &L\\
L^\ast &0
\end{matrix}} \right]\\
&
=
  \left[ {\begin{matrix}
M^{\tiny\textcircled{\#}}M
+
\left(I_n-M^{\tiny\textcircled{\#}}M \right)
L\left(L^\ast L\right)^{-1}L^\ast
&
M^{\tiny\textcircled{\#}}   L
 \\
 \left(L^\ast L\right)^{-1}L^\ast M
 &
 \left(L^\ast L\right)^{-1}L^\ast   L
\end{matrix}} \right]
 \\&
=
I_{2n-r},
\end{align*}
that is,
$G$ is invertible and $G^{-1}$ is of the form (\ref{CM-4}).
\end{proof}

Based on Lemma \ref{CM-Lemma-M-2}
and
Lemma \ref{CM-lemma-M-1},
we get a Cramer's rule for the unique solution of (\ref{CM-1}).
\begin{theorem}
\label{CM-Theorem-3}
Let $M$ and $b$ be as in (\ref{CM-10}),
and
let  $L$  be as in Lemma \ref{CM-Lemma-M-2}.
Then (\ref{CM-1}) has the unique solution
$x =\left( {{x_1},{x_2}, \ldots ,{x_n}} \right)^T$
satisfying
\begin{align}
\label{CM-5}
{x_i} = {{\det \left(\left[ {\begin{matrix}
{M\left( {i \to b} \right)}&L\\
{L^\ast\left( {i \to 0} \right)}&0
\end{matrix}}\right] \right)} \mathord{\left/
{\det \left(\left[ {\begin{matrix}
M      &L\\
L^\ast &0
\end{matrix}} \right]\right)}\right.}},
\end{align}
where $i = 1,2, \ldots ,n$.
\end{theorem}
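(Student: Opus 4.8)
The plan is to recast the constrained problem (\ref{CM-1}) as a genuine nonsingular linear system and then invoke the classical Cramer's rule (\ref{CM-16}). By Theorem \ref{CM-Theorem-1}, the unique solution of (\ref{CM-1}) is $x = M^{\tiny\textcircled{\#}}b$, and by Lemma \ref{CM-lemma-M-1} the matrix $G$ in (\ref{CM-3}) is invertible, with $G^{-1}$ given by (\ref{CM-4}). Set $c = \left[ {\begin{matrix} b \\ 0 \end{matrix}} \right] \in \mathbb{C}^{2n-r}$ and let $w = G^{-1}c$. Partitioning $w = \left[ {\begin{matrix} u \\ v \end{matrix}} \right]$ with $u \in \mathbb{C}^n$ and $v \in \mathbb{C}^{n-r}$, formula (\ref{CM-4}) gives $u = M^{\tiny\textcircled{\#}}b + \left(I_n - M^{\tiny\textcircled{\#}}M\right)L\left(L^\ast L\right)^{-1}\cdot 0 = M^{\tiny\textcircled{\#}}b$; thus $u = x$, i.e. the first $n$ coordinates of $G^{-1}c$ are precisely $x_1, \dots, x_n$.

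Next I would apply the classical Cramer's rule (\ref{CM-16}) to the nonsingular system $Gw = c$: for each index $i$ with $1 \le i \le n$, the $i$-th coordinate of $w$ — which we have just identified with $x_i$ — equals $\det\left(G(i \to c)\right)/\det(G)$. It then remains only to describe $G(i \to c)$ explicitly. Because $i \le n$, replacing the $i$-th column of the block matrix $G$ by $c$ touches only the leftmost $n$ columns: it replaces the $i$-th column of the top block $M$ by $b$ and the $i$-th column of the bottom block $L^\ast$ by $0$, while leaving the blocks $L$ and $0$ on the right untouched. Hence
\begin{align}
\nonumber
G(i \to c) = \left[ {\begin{matrix} M(i \to b) & L \\ L^\ast(i \to 0) & 0 \end{matrix}} \right],
\end{align}
and substituting this together with $\det(G) = \det\left(\left[ {\begin{matrix} M & L \\ L^\ast & 0 \end{matrix}} \right]\right)$ into the Cramer quotient yields (\ref{CM-5}).

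The whole argument is essentially bookkeeping once Theorem \ref{CM-Theorem-1} and Lemma \ref{CM-lemma-M-1} are available; the one step that genuinely uses structure rather than formal manipulation is showing $u = x$ in the first paragraph, where the zero $(2,2)$-block of $G$ and the zero bottom block of $c$, combined with the explicit shape (\ref{CM-4}) of $G^{-1}$, conspire to annihilate the off-diagonal contribution. I do not expect any real obstacle beyond keeping the partitioned indices straight.
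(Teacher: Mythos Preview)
Your argument is correct and follows essentially the same route as the paper: embed the constrained problem into the nonsingular system $G\hat{x}=\hat{b}$ with $\hat{b}=\left[\begin{smallmatrix}b\\0\end{smallmatrix}\right]$, use the explicit form (\ref{CM-4}) of $G^{-1}$ to identify the first $n$ components of $\hat{x}$ with $M^{\tiny\textcircled{\#}}b$, and then read off (\ref{CM-5}) from the classical Cramer's rule (\ref{CM-16}). Your write-up is in fact a bit more explicit than the paper's in spelling out why $G(i\to c)$ has the stated block form.
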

\begin{proof}
Since $G$ is invertible,
applying Lemma \ref{CM-lemma-M-1},
we get the unique solution
$\hat{x}=G^{-1}\hat{b}$
of $G\hat{x}=\hat{b}$,
in which
$\hat{x}^\ast
=
[\begin{matrix}
x^\ast &
y^\ast
\end{matrix}]^\ast$
and
$\hat{b}^\ast
=
[\begin{matrix}
b^\ast &
0
\end{matrix}]^\ast$.
It follows from
 (\ref{CM-4})
that
\begin{align*}
\left[\begin{matrix}
x\\
y
\end{matrix}\right]
&
=
\left[ {\begin{matrix}
M^{\tiny\textcircled{\#}}
&
\left(I_n-M^{\tiny\textcircled{\#}}M \right)
L\left(L^\ast L\right)^{-1}  \\
\left(L^\ast L\right)^{-1}  L^\ast    &0
\end{matrix}} \right]
\left[\begin{matrix}
b\\
0
\end{matrix}\right]
=
\left[\begin{matrix}
M^{\tiny\textcircled{\#}}b\\
\left(L^\ast L\right)^{-1}  L^\ast b
\end{matrix}\right].
\end{align*}
Applying
(\ref{CM-16})
we obtain (\ref{CM-5}).
\end{proof}

\bigskip

In the following theorem,
we give a characterization of the core inverse
and
prepare for
a  Cramer's ruler
for the core inverse in Theorem \ref{CM-Theorem-4}.

\begin{theorem}
\label{CM-Theorem-2}
Let $M$ and $L$   be as in (\ref{CM-3}).
Then
\begin{align}
\label{CM-000}
 {M^{\tiny\textcircled{\#}} }
 =
 {\left( {M{M^\ast  }M + L{L^\ast  }} \right)^{ - 1}}M{M^\ast}.
 \end{align}
\end{theorem}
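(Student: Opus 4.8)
The plan is to verify the identity $M^{\tiny\textcircled{\#}} = (MM^\ast M + LL^\ast)^{-1}MM^\ast$ by first showing the matrix $MM^\ast M + LL^\ast$ is invertible and then checking that multiplying the proposed right-hand side by $(MM^\ast M + LL^\ast)$ recovers $MM^\ast$. Working in the unitary decomposition $M = V\left[\begin{smallmatrix} T & S \\ 0 & 0\end{smallmatrix}\right]V^\ast$ from (\ref{CM-10}) keeps everything concrete. The key structural fact is that $\R(L) = \N(M^\ast)$; since $\N(M^\ast)$ is the orthogonal complement of $\R(M)$, and from the block form $\R(M) = V\left(\C^r \oplus 0\right)$, we get that $L = V\left[\begin{smallmatrix} 0 \\ L_2\end{smallmatrix}\right]$ for some nonsingular $L_2 \in \C^{(n-r)\times(n-r)}$, so that $LL^\ast = V\left[\begin{smallmatrix} 0 & 0 \\ 0 & L_2L_2^\ast\end{smallmatrix}\right]V^\ast$ with $L_2 L_2^\ast$ positive definite.

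First I would compute $MM^\ast M$ in the block form. Since $M^\ast = V\left[\begin{smallmatrix} T^\ast & 0 \\ S^\ast & 0\end{smallmatrix}\right]V^\ast$, a direct multiplication gives $MM^\ast = V\left[\begin{smallmatrix} TT^\ast + SS^\ast & 0 \\ 0 & 0\end{smallmatrix}\right]V^\ast$, and then $MM^\ast M = V\left[\begin{smallmatrix} (TT^\ast+SS^\ast)T & (TT^\ast+SS^\ast)S \\ 0 & 0\end{smallmatrix}\right]V^\ast$. Adding $LL^\ast$ fills the bottom-right block, so $MM^\ast M + LL^\ast = V\left[\begin{smallmatrix} (TT^\ast+SS^\ast)T & (TT^\ast+SS^\ast)S \\ 0 & L_2L_2^\ast\end{smallmatrix}\right]V^\ast$, which is block upper-triangular with invertible diagonal blocks (the $(1,1)$ block is a product of the positive-definite $TT^\ast+SS^\ast$ with the nonsingular $T$), hence invertible. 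This establishes the first half.

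Next I would verify the identity by right-multiplying the claimed expression by $MM^\ast M + LL^\ast$ — that is, show $M^{\tiny\textcircled{\#}}(MM^\ast M + LL^\ast) = MM^\ast$. Using (\ref{CM-11}), $M^{\tiny\textcircled{\#}} = V\left[\begin{smallmatrix} T^{-1} & 0 \\ 0 & 0\end{smallmatrix}\right]V^\ast$, so $M^{\tiny\textcircled{\#}}LL^\ast = 0$ (the top-right block of $LL^\ast$ is zero), and $M^{\tiny\textcircled{\#}}MM^\ast M = V\left[\begin{smallmatrix} T^{-1}(TT^\ast+SS^\ast)T & T^{-1}(TT^\ast+SS^\ast)S \\ 0 & 0\end{smallmatrix}\right]V^\ast = V\left[\begin{smallmatrix} (TT^\ast+SS^\ast)T \cdot \text{...}\end{smallmatrix}\right]$; more carefully, $T^{-1}(TT^\ast + SS^\ast)T$ is \emph{not} simply $TT^\ast+SS^\ast$, so instead I should compare the two sides directly: the left side equals $V\left[\begin{smallmatrix} T^{-1}(TT^\ast+SS^\ast)T & T^{-1}(TT^\ast+SS^\ast)S \\ 0 & 0\end{smallmatrix}\right]V^\ast$ while $MM^\ast = V\left[\begin{smallmatrix} TT^\ast+SS^\ast & 0 \\ 0 & 0\end{smallmatrix}\right]V^\ast$. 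These do not match, which signals that one should instead \emph{left}-multiply: check $(MM^\ast M + LL^\ast)M^{\tiny\textcircled{\#}} = $ something, or more robustly, simply compute $(MM^\ast M + LL^\ast)^{-1}MM^\ast$ directly via the block inverse and compare with (\ref{CM-11}).

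So the cleanest route, and the one I would actually carry out, is: invert the block upper-triangular matrix explicitly using the formula $\left[\begin{smallmatrix} A & B \\ 0 & D\end{smallmatrix}\right]^{-1} = \left[\begin{smallmatrix} A^{-1} & -A^{-1}BD^{-1} \\ 0 & D^{-1}\end{smallmatrix}\right]$ with $A = (TT^\ast+SS^\ast)T$, $B = (TT^\ast+SS^\ast)S$, $D = L_2L_2^\ast$; then multiply on the right by $MM^\ast = V\left[\begin{smallmatrix} TT^\ast+SS^\ast & 0 \\ 0 & 0\end{smallmatrix}\right]V^\ast$. The $(1,1)$ block of the product becomes $A^{-1}(TT^\ast+SS^\ast) = T^{-1}(TT^\ast+SS^\ast)^{-1}(TT^\ast+SS^\ast) = T^{-1}$, and all other blocks vanish (the right factor kills the second block column, and the first block column of $MM^\ast$ has zero bottom entry). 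Hence $(MM^\ast M + LL^\ast)^{-1}MM^\ast = V\left[\begin{smallmatrix} T^{-1} & 0 \\ 0 & 0\end{smallmatrix}\right]V^\ast = M^{\tiny\textcircled{\#}}$ by (\ref{CM-11}), which is exactly (\ref{CM-000}). The main obstacle is purely bookkeeping: being careful that $A^{-1} = T^{-1}(TT^\ast+SS^\ast)^{-1}$ (not $(TT^\ast+SS^\ast)^{-1}T^{-1}$) so that the cancellation with the positive-definite factor $TT^\ast+SS^\ast$ on the right goes through cleanly; everything else is routine block arithmetic.
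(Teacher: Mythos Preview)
Your final argument is correct, though the write-up takes a detour through a dead end (right-multiplying by $M^{\tiny\textcircled{\#}}$) before settling on the clean route; in a polished version you would drop that middle paragraph. The actual computation---writing $M$ in the form (\ref{CM-10}), observing that $\R(L)=\N(M^\ast)$ forces $L=V\left[\begin{smallmatrix}0\\L_2\end{smallmatrix}\right]$ with $L_2$ nonsingular, assembling $MM^\ast M+LL^\ast$ as a block upper-triangular matrix with invertible diagonal blocks, inverting it, and multiplying by $MM^\ast$ to obtain $V\left[\begin{smallmatrix}T^{-1}&0\\0&0\end{smallmatrix}\right]V^\ast$---is entirely sound and matches (\ref{CM-11}).

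This is a genuinely different route from the paper's. The paper works coordinate-free: it observes that $(MM^\ast M)(MM^\ast M)^{\tiny\textcircled{\#}}=P_{\R(M),\N(M)}$ and $(LL^\ast)(LL^\ast)^\dagger=P_{\N(M),\R(M)}$, so $(MM^\ast M+LL^\ast)\bigl((MM^\ast M)^{\tiny\textcircled{\#}}+(LL^\ast)^\dagger\bigr)=I_n$, which both proves invertibility and exhibits the inverse explicitly; then it multiplies this inverse by $MM^\ast$ and uses $(LL^\ast)^\dagger MM^\ast=0$ together with the identity $(MM^\ast M)^{\tiny\textcircled{\#}}MM^\ast=M^{\tiny\textcircled{\#}}$. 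Your approach is more elementary and self-contained---pure block arithmetic with no appeal to projector identities or to $(MM^\ast M)^{\tiny\textcircled{\#}}$---at the cost of being tied to the particular decomposition (\ref{CM-10}). The paper's argument, by contrast, reveals the structural reason the formula works (complementary projectors summing to the identity) and would transfer more readily to operator or ring settings.
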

\begin{proof}
Since
$\R\left(L\right)=\N\left( {{M^\ast}} \right)$,
$M\in\mathbb{C}^{\mbox{\rm\footnotesize\texttt{CM}}}_{n }$
and
$\R\left(M\right)\oplus  \N\left(M\right) =\mathbb{ C}^n$,
we obtain
\begin{align}
\nonumber
\left( {L{L^\ast  }} \right){\left( {L{L^\ast  }} \right)^\dag }
&
=
 {P_{\N\left( M \right),\R\left( M \right)}},
\\
\nonumber
\left( {M{M^\ast  }M} \right){\left( {M{M^\ast  }M} \right)^{\tiny\textcircled{\#}} }
&
= {P_{\R\left( M \right),\N\left(M \right)}}
\end{align}
and
\begin{align}
\nonumber
\left( {M{M^\ast  }M + L{L^\ast  }} \right)
\left( {{{\left( {M{M^\ast  }M} \right)}^{\tiny\textcircled{\#}} }
+ {{\left( {L{L^\ast  }} \right)}^\dag }}\right)
&
=
\left( {M{M^\ast  }M} \right){\left( {M{M^\ast  }M} \right)^{\tiny\textcircled{\#}} }
+
\left({L{L^\ast  }} \right){\left( {L{L^\ast  }} \right)^\dag }
\\
&
\nonumber
= {P_{\R\left( M \right),\N\left( M \right)}} + {P_{\N\left( M \right),\R\left( M
\right)}} = {I_n}.
\end{align}
Therefore,
$M{M^\ast  }M + L{L^\ast  }$ is invertible.

Since
$
{\left( {L{L^\ast  }} \right)^\dag }M{M^\ast  } = 0$
 and
${\left( {M{M^\ast  }M} \right)^{\tiny\textcircled{\#}} }M{M^\ast  }
 =
 {M^{\tiny\textcircled{\#}} }$,
we have
\begin{align}
\nonumber
 {\left( {M{M^\ast  }M + L{L^\ast  }} \right)^{ - 1}}M{M^\ast  }
=
{\left( {M{M^\ast  }M} \right)^{\tiny\textcircled{\#}} }M{M^\ast  }
+
{\left( {L{L^\ast  }} \right)^\dag }M{M^\ast  }
=
 {M^{\tiny\textcircled{\#}} }.
\end{align}
It follows
that (\ref{CM-000}).
\end{proof}

\bigskip

\begin{theorem}
\label{CM-Theorem-4}
Let $M$ and $L$   be as in (\ref{CM-3}).
Then (\ref{CM-1}) has the unique solution
$x =\left( {{x_1},{x_2}, \ldots ,{x_n}} \right)^T$
satisfying
\begin{align}
\label{CM-5-1}
{x_j} =
\frac{\det {\left( {M{M^\ast  }M + L{L^\ast  }} \right)}
\left( {j \to M{M^\ast  }b} \right) }
{\det {\left( {M{M^\ast  }M + L{L^\ast  }} \right)}} ,
\end{align}
where $j = 1,2, \ldots ,n$.
\end{theorem}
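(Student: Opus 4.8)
The plan is to combine the two previous theorems directly. By Theorem~\ref{CM-Theorem-1}, problem~(\ref{CM-1}) has the unique solution $x = M^{\tiny\textcircled{\#}}b$, so it suffices to express the entries of $M^{\tiny\textcircled{\#}}b$ by a Cramer's rule. By Theorem~\ref{CM-Theorem-2}, the matrix $A := MM^\ast M + LL^\ast$ is invertible and $M^{\tiny\textcircled{\#}} = A^{-1}MM^\ast$. Hence $x = A^{-1}\bigl(MM^\ast b\bigr)$, i.e. $x$ is the unique solution of the ordinary nonsingular linear system $Ax = MM^\ast b$.

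First I would write $c := MM^\ast b \in \mathbb{C}^n$ and observe that, since $A$ is nonsingular, the classical Cramer's rule~(\ref{CM-16}) applies verbatim to $Ax = c$, giving
\begin{align*}
x_j = \frac{\det\bigl(A(j \to c)\bigr)}{\det(A)}, \qquad j = 1,2,\ldots,n.
\end{align*}
Substituting $A = MM^\ast M + LL^\ast$ and $c = MM^\ast b$ yields exactly~(\ref{CM-5-1}). So the proof is essentially a two-line chaining of Theorem~\ref{CM-Theorem-1} (identifying the solution as $M^{\tiny\textcircled{\#}}b$), Theorem~\ref{CM-Theorem-2} (rewriting $M^{\tiny\textcircled{\#}}b$ as the solution of a nonsingular system), and the standard Cramer's rule~(\ref{CM-16}).

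The only point that needs a word of care is notational: the right-hand side of~(\ref{CM-5-1}) is written as $\det\bigl(MM^\ast M + LL^\ast\bigr)(j \to MM^\ast b)$, which should be read as $\det\bigl((MM^\ast M + LL^\ast)(j \to MM^\ast b)\bigr)$, i.e. replace the $j$-th column of $A$ by the vector $MM^\ast b$ and then take the determinant. I would make this parenthesization explicit in the write-up so there is no ambiguity. I do not anticipate any real obstacle here, since all the substantive work — existence and uniqueness of the solution, and the invertibility of $MM^\ast M + LL^\ast$ together with the identity $M^{\tiny\textcircled{\#}} = (MM^\ast M + LL^\ast)^{-1}MM^\ast$ — has already been done in Theorems~\ref{CM-Theorem-1} and~\ref{CM-Theorem-2}; the remaining step is just invoking the elementary Cramer's rule for a square invertible coefficient matrix.
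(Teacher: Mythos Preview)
Your proposal is correct and follows essentially the same route as the paper: apply Theorem~\ref{CM-Theorem-1} to get $x=M^{\tiny\textcircled{\#}}b$, use Theorem~\ref{CM-Theorem-2} to rewrite this as $(MM^\ast M+LL^\ast)x=MM^\ast b$, and then invoke the classical Cramer's rule~(\ref{CM-16}). Your remark about the parenthesization in~(\ref{CM-5-1}) is a fair notational observation but does not affect the argument.
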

\begin{proof}
Applying
Theorem \ref{CM-Theorem-2}
to
Theorem \ref{CM-Theorem-1}, we have
\begin{align}
\nonumber
x=
 {\left( {M{M^\ast  }M + L{L^\ast  }} \right)^{ - 1}}M{M^\ast}b,
 \end{align}
that is,
\begin{align}
\nonumber
\left( {M{M^\ast  }M + L{L^\ast  }} \right)x=
M{M^\ast}b.
\end{align}
It follows from  (\ref{CM-16}) that we get (\ref{CM-5-1}).
\end{proof}

In \cite{Ji2005laa183},
Ji obtains  the condensed determinantal expressions of $M^\dag$ and $M^D$.
By using Theorem \ref{CM-Theorem-2},
we get a condensed determinantal expression of $M^{\tiny\textcircled{\#}}$.

\begin{theorem}
\label{CM-Theorem-5}
Let $M$ and $L$ be defined as in (\ref{CM-3}).
Then
the core inverse $M^{\tiny\textcircled{\#}}$ is given:
\begin{align}
\label{CM-5-2}
{M^{\tiny\textcircled{\#}}_{i,j}} =
\frac{\det {\left( {M{M^\ast  }M + L{L^\ast  }} \right)}
\left( {i \to \left(MM^\ast\right)  e_j} \right) }
{\det {\left( {M{M^\ast  }M + L{L^\ast  }} \right)}},
\end{align}
where
$1\leq i,j \leq n$.
\end{theorem}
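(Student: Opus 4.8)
The plan is to derive the entrywise determinantal formula \eqref{CM-5-2} directly from the matrix identity \eqref{CM-000} in Theorem \ref{CM-Theorem-2}, exactly in the spirit of how Theorem \ref{CM-Theorem-4} was obtained from Theorem \ref{CM-Theorem-1}. First I would recall that by Theorem \ref{CM-Theorem-2} the matrix $A := MM^\ast M + LL^\ast$ is invertible and $M^{\tiny\textcircled{\#}} = A^{-1}MM^\ast$. Writing $A M^{\tiny\textcircled{\#}} = MM^\ast$ and reading off the $j$-th column of both sides gives $A\,\bigl(M^{\tiny\textcircled{\#}} e_j\bigr) = (MM^\ast)e_j$, so the $j$-th column of $M^{\tiny\textcircled{\#}}$ is the unique solution of a square nonsingular linear system with coefficient matrix $A$ and right-hand side $(MM^\ast)e_j$.

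Next I would apply the classical Cramer's rule \eqref{CM-16} to this system: the $i$-th entry of the solution vector, which is precisely $M^{\tiny\textcircled{\#}}_{i,j}$, equals $\det\bigl(A(i\to (MM^\ast)e_j)\bigr)/\det(A)$. Substituting $A = MM^\ast M + LL^\ast$ yields \eqref{CM-5-2}. Since $(MM^\ast)e_j$ is just the $j$-th column of $MM^\ast$, one can equivalently phrase the numerator as replacing the $i$-th column of $MM^\ast M + LL^\ast$ by the $j$-th column of $MM^\ast$; this matches the notation in the statement.

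There is essentially no hard step here — the result is a routine corollary of Theorem \ref{CM-Theorem-2} combined with Cramer's rule. The only point needing a word of care is that Cramer's rule requires the coefficient matrix to be square and nonsingular, which is guaranteed by the invertibility of $MM^\ast M + LL^\ast$ established in the proof of Theorem \ref{CM-Theorem-2}; I would cite that invertibility explicitly before invoking \eqref{CM-16}. A short proof follows.

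\begin{proof}
By Theorem \ref{CM-Theorem-2}, the matrix $MM^\ast M + LL^\ast$ is invertible and
\begin{align}
\nonumber
\left( {M{M^\ast  }M + L{L^\ast  }} \right){M^{\tiny\textcircled{\#}}} = M{M^\ast}.
\end{align}
Comparing the $j$-th columns of both sides gives
\begin{align}
\nonumber
\left( {M{M^\ast  }M + L{L^\ast  }} \right)\left( {M^{\tiny\textcircled{\#}}}e_j \right) = \left(M{M^\ast}\right)e_j,
\quad j = 1,2,\ldots,n,
\end{align}
so ${M^{\tiny\textcircled{\#}}}e_j$ is the unique solution of a nonsingular linear system with coefficient matrix $MM^\ast M + LL^\ast$. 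Its $i$-th entry is ${M^{\tiny\textcircled{\#}}_{i,j}}$, and applying the Cramer's rule \eqref{CM-16} to this system yields \eqref{CM-5-2}.
\end{proof}
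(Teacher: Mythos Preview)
Your proof is correct and follows essentially the same route as the paper's: both invoke the invertibility of $MM^\ast M + LL^\ast$ from Theorem~\ref{CM-Theorem-2}, set up the linear system $\left(MM^\ast M + LL^\ast\right)x = (MM^\ast)e_j$, apply the classical Cramer's rule~\eqref{CM-16}, and identify $x$ with $M^{\tiny\textcircled{\#}}e_j$ via \eqref{CM-000}. The only cosmetic difference is that the paper writes the identification as $M^{\tiny\textcircled{\#}}_{i,j}=e_i^T M^{\tiny\textcircled{\#}} e_j$ rather than speaking of columns, but the argument is the same.
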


\begin{proof}
Since
$ {M{M^\ast  }M + L{L^\ast  }} $ is invertible,
we consider
$$\left( {M{M^\ast  }M + L{L^\ast  }} \right)x
=
\left(MM^\ast\right)  e_j$$
and get the solution
\begin{align*}
e_i^Tx =
\frac{\det {\left( {M{M^\ast  }M + L{L^\ast  }} \right)}
\left( {i \to \left(MM^\ast\right)  e_j} \right) }
{\det {\left( {M{M^\ast  }M + L{L^\ast  }} \right)}},
\end{align*}
in which $i, j=1,\ldots,n$.

It follows from (\ref{CM-000}) and
${M^{\tiny\textcircled{\#}}_{i,j}}=e_i^TM^{\tiny\textcircled{\#}}e_j$
that
we get (\ref{CM-5-2}).
\end{proof}

\begin{example}
\label{CM-Example-1}
Let
$M
=
\left[\begin{matrix}
1&2\\
0&0\\
\end{matrix}\right]$,
$L
=
\left[\begin{matrix}0
\\
1\end{matrix}\right]$
and
$b=
\left[\begin{matrix}
1\\1\end{matrix}\right]$.
It is easy to check that
$\R\left(L\right)=\N\left( {{M^\ast}} \right)$,
$
\left(I_n-M^{\tiny\textcircled{\#}}M \right)
L\left(L^\ast L\right)^{-1}
=
\left[\begin{matrix}-2\\1\end{matrix}\right]$,
$\left(L^\ast L\right)^{-1}  L^\ast   =
\left[\begin{matrix}0 & 1\end{matrix}\right]
$,
$G
=
\left[ {\begin{matrix}
1&2&0\\
0&0&1\\
0&1&0\\
\end{matrix}} \right]$,
$\det(G)=-1$
and
$
G^{-1}
=\left[ {\begin{matrix}
1&0&-2\\
0&0&1\\
0&1&0\\
\end{matrix}} \right]$. 

By applying Lemma \ref{CM-lemma-M-1}, we have
$M^{\tiny\textcircled{\#}}
=
\left[\begin{matrix}1&0\\0&0\end{matrix}\right]$.
By applying Theorem \ref{CM-Theorem-1},
we get 
  the solution of (\ref{CM-1})
  is $x=M^{\tiny\textcircled{\#}}b=
\left[ {\begin{matrix}
1\\0\end{matrix}} \right]$.

For
$\det \left(
\left[ {\begin{matrix}
1&2&0\\
1&0&1\\
0&1&0\\
\end{matrix}} \right]\right)=-1$
and
$
\det \left(
\left[ {\begin{matrix}
1&1&0\\
0&1&1\\
0&0&0\\
\end{matrix}} \right]\right)=0$,
by applying Theorem \ref{CM-Theorem-3},
we get $x_1=\frac{-1}{-1}$ and  $x_2=\frac{0}{-1}$.
Therefore,
  the solution of (\ref{CM-1})
  is $x=
\left[ {\begin{matrix}
1\\0\end{matrix}} \right]$.

For
${\det {\left( {M{M^\ast  }M + L{L^\ast  }} \right)}}
=5$,
$\det {\left( {M{M^\ast  }M + L{L^\ast  }} \right)}
\left( {1 \to M{M^\ast  }b} \right)
=5
$
and
$\det {\left( {M{M^\ast  }M + L{L^\ast  }} \right)}
\left( {2 \to M{M^\ast  }b} \right)
=0$,
by applying Theorem \ref{CM-Theorem-4},
we get  $x_1=\frac{5}{5}$ and  $x_2=\frac{0}{5}$.
Therefore,
the solution of (\ref{CM-1})
  is $x=
\left[ {\begin{matrix}
1\\0\end{matrix}} \right]$.
\end{example}

\section*{Disclosure statement}
No potential conflict of interest was reported by the authors.
\section*{Funding}
The first
 author was supported partially by
Guangxi Natural Science Foundation [grant number 2018GXNSFAA138181],
the National Natural Science Foundation of China
[grant number 61772006]
and
the Special Fund for Bagui Scholars of Guangxi [grant number 2016A17].
The second
 author was supported partially by
the National Natural Science Foundation of China
[grant number  11361009]
and
High Level Innovation Teams and Distinguished Scholars in Guangxi Universities
[grant number GUIJIAOREN201642HAO]

\section*{References}

\end{document}